\theoremstyle{plain}
\newtheorem{theorem}{Theorem}
\newtheorem{lemma}[theorem]{Lemma}
\newtheorem{proposition}[theorem]{Proposition}
\theoremstyle{definition}
\theoremstyle{remark}
\newcommand{\eps}{\epsilon}
\newcommand{\re}{\mathbb{R}}
\newcommand{\rd}{{\mathbb{R}^d}}
\newcommand{\rdd}{\mathbb{R}^{d+1}}
\newcommand{\cF}{\mathcal{F}}
\newcommand{\cS}{\mathcal{S}}
\renewcommand{\i}{\mathrm{i}}
\newcommand{\e}{\mathrm{e}}
\def\div{\mathop{\operatorname{div}}\nolimits}
\date{\today}
\author{Simon Bortz}
\author{Moritz Egert}
\author{Olli Saari}
\address{Simon Bortz, Department of Mathematics, University of Alabama, Tuscaloosa, AL, 35487, USA}
\email{sbortz@ua.edu}
\address{Moritz Egert, Universit\'{e} Paris-Saclay, CNRS, Laboratoire de math\'{e}matiques d'Orsay, 91405, Orsay, France}
\email{moritz.egert@universite-paris-saclay.fr}
\address{Olli Saari, Mathematical Institute, University of Bonn, Endenicher Allee 60, 53115 Bonn, Germany}
\email{saari@math.uni-bonn.de} 
\subjclass[2010]{35K55, 42B15.}
\keywords{non-linear parabolic systems, weak solutions, time regularity.}
\begin{document}
\allowdisplaybreaks

\title[Time regularity for parabolic systems of $p$-Laplace type]{Note on time-regularity for weak solutions to parabolic systems of $p$-Laplace type}

\begin{abstract}
We show that local weak solutions to parabolic systems of $p$-Laplace type are H\"older continuous in time with values in a spatial Lebesgue space and H\"older continuous on almost every time line. We provide an elementary and self-contained proof building on the local higher integrability result of Kinnunen and Lewis. 
\end{abstract}
\maketitle

\section{Introduction}
\noindent Let $d \ge 2$ and $2d/(d+2)<p<\infty$ and $N \ge 1$. Consider the following parabolic system of $p$-Laplace type:
\begin{equation}
\label{pparatype.eq}
\frac{\partial u_i}{\partial t} = \div A_i(t,x, \nabla u) + B_i(t,x,\nabla u), \quad i = 1, \dots, N, \quad \text{ in $I \times Q$},
\end{equation}
where $I \subset \mathbb{R}$ is an interval, $Q \subset \mathbb{R}^d$ a cube, and $A_i$ and $B_i$ satisfy certain structural conditions. These are the same as in \cite{KL} and do not require any smoothness of $A_i$ and $B_i$, see Section~\ref{subsec.structure}. In a celebrated paper, Kinnunen and Lewis have obtained the higher integrability of the gradient of weak solutions. 

\begin{theorem}[Theorem~2.8 in {\cite{KL}}]
\label{T2.8KL} 
There exists $\delta >0$ depending on $p$, $d$ and the structural constants $c_1$, $c_2$ and $c_3$ such that if $u \in L^2(I \times Q) \cap L^p(I;W^{1,p}(Q))$ is a weak solution to \eqref{pparatype.eq}, $I' \Subset I$ and $Q' \Subset Q$, then 
\[
u \in L^{p + \delta}(I';W^{1,p + \delta}(Q')).
\]
The norm of $u$ in $L^{p + \delta}(I';W^{1,p + \delta}(Q'))$ depends on the same constants, on $N$, $I$, $I'$, $Q$, $Q'$, the structural constant $c_4$ and the norms $\|u\|_{L^2(I \times Q)}$ and $\|u\|_{L^p(I; W^{1,p}(Q))}$.
\end{theorem}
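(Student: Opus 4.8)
The plan is the classical two-step route to self-improving integrability: first derive a \emph{reverse H\"older inequality} for $|\nabla u|^{p}$ on \emph{intrinsic} parabolic cylinders, then upgrade it to genuine higher integrability by a Gehring-type argument adapted to the non scale-invariant parabolic geometry. Fix $I'\Subset I$ and $Q'\Subset Q$; after an intermediate covering it suffices to work inside a reference cylinder $Q_{*}=J_{*}\times B_{*}\Subset I\times Q$ with $I'\times Q'\subset Q_{*}$, on subcylinders
\[
Q_{r}^{\lambda}(z_{0}):=\bigl(t_{0}-\lambda^{2-p}r^{2},\,t_{0}\bigr)\times B_{r}(x_{0}),\qquad z_{0}=(t_{0},x_{0})\in Q_{*},\ \lambda\ge 1 .
\]
For an integrable $f$ I write $f_{E}$ for its average over $E$. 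The first ingredient is a \emph{Caccioppoli inequality}: for a space--time cutoff $\eta$ adapted to $Q_{2r}^{\lambda}$ and a constant vector $b\in\re^{N}$ (a spatial average of $u$), I would test the weak formulation of \eqref{pparatype.eq} with $\varphi_{i}=\eta^{p}(u_{i}-b_{i})$, making the time derivative legitimate by Steklov averaging (which also upgrades $u$ to $L^{\infty}_{\mathrm{loc}}(I;L^{2})$). The coercivity, growth and sign conditions on $A_{i}$, $B_{i}$ from \cite{KL}, together with Young's inequality and absorption, then yield
\[
\esssup_{t}\ \frac{1}{|B_{r}|}\int_{B_{r}}\Bigl|\frac{u-b}{r}\Bigr|^{2}\,dx\;+\;\frac{1}{|Q_{r}^{\lambda}|}\iint_{Q_{r}^{\lambda}}|\nabla u|^{p}\ \lesssim\ \frac{1}{|Q_{2r}^{\lambda}|}\iint_{Q_{2r}^{\lambda}}\Bigl|\frac{u-b}{r}\Bigr|^{p}\;+\;R ,
\]
where $R$ gathers lower-order contributions carrying fixed positive powers of $r$ (and of $\lambda$ through the intrinsic time length) and is harmless for small cylinders.

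Next I would combine this with the \emph{parabolic Sobolev--Poincar\'e} inequality, i.e.\ the interpolation of $L^{\infty}_{t}L^{2}_{x}$ against $L^{p}_{t}W^{1,p}_{x}$ via Gagliardo--Nirenberg--Sobolev, so that with $v=(u-b)/r$,
\[
\frac{1}{|Q_{2r}^{\lambda}|}\iint_{Q_{2r}^{\lambda}}|v|^{p}\ \lesssim\ \Bigl(\esssup_{t}\frac{1}{|B_{2r}|}\int_{B_{2r}}|v|^{2}\Bigr)^{1-\vartheta}\Bigl(\frac{1}{|Q_{2r}^{\lambda}|}\iint_{Q_{2r}^{\lambda}}|\nabla u|^{pq}\Bigr)^{\vartheta/q}
\]
for exponents $\vartheta\in(0,1)$, $q\in(0,1)$ depending only on $d,p$; this is precisely where $p>2d/(d+2)$ enters, to keep the Sobolev exponents and the powers of $\lambda$ consistent. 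Inserting this back into the Caccioppoli estimate on a cylinder satisfying the \emph{intrinsic coupling} $\lambda\approx |Q_{2r}^{\lambda}|^{-1}\iint_{Q_{2r}^{\lambda}}|\nabla u|^{p}$, the powers of $\lambda$ coming from $|Q_{r}^{\lambda}|\approx\lambda^{2-p}r^{d+2}$ and from the two factors on the right cancel, leaving a reverse H\"older inequality
\[
\frac{1}{|Q_{r}^{\lambda}|}\iint_{Q_{r}^{\lambda}}|\nabla u|^{p}\ \lesssim\ \Bigl(\frac{1}{|Q_{2r}^{\lambda}|}\iint_{Q_{2r}^{\lambda}}|\nabla u|^{pq}\Bigr)^{1/q}\;+\;R .
\]

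Because $Q_{r}^{\lambda}$ depends on the unknown $\lambda$, I cannot apply the Euclidean Gehring lemma off the shelf; instead I would run a Calder\'on--Zygmund/Vitali \emph{stopping-time} selection over the intrinsic cylinders. Fix a base level $\lambda_{0}$ large in terms of $\|u\|_{L^{2}(I\times Q)}$, $\|u\|_{L^{p}(I;W^{1,p}(Q))}$ and $\dist(Q_{*},\partial(I\times Q))$. For $\lambda>\lambda_{0}$, cover the super-level set of a suitable intrinsic maximal function of $|\nabla u|^{p}$ at height $\lambda$ by a disjointified family of cylinders on each of which the average of $|\nabla u|^{p}$ is comparable to $\lambda$ while remaining $\le\lambda$ on every concentric dilate; feeding the reverse H\"older inequality into each of them produces a good-$\lambda$ inequality
\[
\iint_{\{|\nabla u|^{p}>A\lambda\}}|\nabla u|^{p}\ \le\ C\,\varepsilon_{0}\!\!\iint_{\{|\nabla u|^{p}>\lambda\}}|\nabla u|^{p}\;+\;(\text{lower order}),\qquad \lambda>\lambda_{0},
\]
with $A$ fixed and $\varepsilon_{0}$ small once the covering parameters are chosen. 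Multiplying by $\lambda^{\sigma-1}$ and integrating over $\lambda>\lambda_{0}$ for $\sigma>0$ small, the first term on the right is absorbed and one obtains $|\nabla u|^{p(1+\sigma)}\in L^{1}_{\mathrm{loc}}$ with a quantitative bound; this is the claim with $\delta=p\sigma$, the stated dependence of the norm being read off from the constants accumulated above and from the finite covering of $I'\times Q'$ by reference cylinders.

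I expect this last step to be the main obstacle. For $p\neq 2$ the intrinsic cylinders are geometrically distorted — long and thin if $p>2$, short and flat if $p<2$ — so the Vitali covering, the comparability of averages on dilated cylinders, and the control of the term $\esssup_{t}\int|u-b|^{2}$ (a quantity global in a time interval whose very length depends on $\lambda$) all have to be arranged simultaneously; reconciling this geometry with the measure estimates is the technical core of \cite{KL}. A secondary, more routine, point is the Steklov-regularisation bookkeeping for the time derivative and for the lower-order terms $B_{i}$ in the Caccioppoli step.
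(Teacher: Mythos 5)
The paper does not prove Theorem~\ref{T2.8KL}; it is imported verbatim from Kinnunen and Lewis~\cite{KL} and used as a black-box input, so there is no proof of the authors' own to compare against. Your sketch is a faithful high-level reconstruction of the intrinsic-scaling argument of \cite{KL} (Caccioppoli via Steklov averaging, parabolic Sobolev--Poincar\'e interpolation, reverse H\"older on $\lambda$-intrinsic cylinders with the coupling $\lambda\approx$ the local average of $|\nabla u|^{p}$, and a stopping-time good-$\lambda$ absorption), and you correctly locate where the restriction $p>2d/(d+2)$ enters and where the genuine technical work lies.
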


The case $p=2$ is due to earlier work of Giaquinta and Struwe \cite{GS1982}. The significance of these results is highlighted by the otherwise lacking regularity for solutions to parabolic systems, which can be essentially discontinuous.

In this short note we prove the following in-time H\"older continuity as an addendum to the Kinnunen--Lewis result.

\begin{theorem}
\label{mainthrm.thrm} 
Let $\alpha := \frac{1}{2} (\tfrac{1}{p} - \tfrac{1}{p+\delta})$ and $q := \frac{2}{1-2\alpha}$, where $\delta>0$ is from Theorem~\ref{T2.8KL}. If $u \in L^2(I \times Q) \cap L^p(I;W^{1,p}(Q))$ is a weak solution to \eqref{pparatype.eq}, $I' \Subset I$ and $Q' \Subset Q$, then there is a representative
\[
u \in C^\alpha(I'; L^q(Q')).
\]
The norm of $u$ in $C^\alpha(I'; L^q(Q'))$ has the same dependencies as in Theorem \ref{T2.8KL}. Moreover, for a.e.\ $x \in Q'$ the restriction $u|_{I' \times \{x\}}$ is $\alpha$-H\"older continuous.
\end{theorem}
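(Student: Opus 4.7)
First, I would test the weak form of \eqref{pparatype.eq} against a test function of the form $\psi(x)\eta_\varepsilon(t)$, with $\eta_\varepsilon$ a smooth mollification of $\mathbbm{1}_{(t_1,t_2)}$ and $\psi \in C^\infty_c(Q'')$ for some $Q' \Subset Q'' \Subset Q$, and pass $\varepsilon \to 0$ using the standard $L^2$-in-time continuity of weak solutions. For a.e.\ $t_1 < t_2$ in $I'$ this yields
\[
\int_{Q''}(u(t_2)-u(t_1))\psi\,dx = \int_{t_1}^{t_2}\!\!\int_{Q''}(-A\cdot\nabla\psi + B\psi)\,dx\,dt.
\]
The Kinnunen--Lewis structural bounds $|A|,|B| \lesssim |\nabla u|^{p-1}$ (plus data-dependent lower-order terms) and H\"older's inequality in space--time with dual exponents $(p',p)$, where $p'=p/(p-1)$, yield the negative-Sobolev H\"older bound
\[
\|u(t_2)-u(t_1)\|_{W^{-1,p'}(Q'')} \leq C\,|t_2-t_1|^{1/p},
\]
with $C$ depending on $\|\nabla u\|_{L^p(I'\times Q)}$.

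Next, I would apply the complex-interpolation identity $[W^{1,p+\delta}(Q''),W^{-1,p'}(Q'')]_{1/2} = L^{q}(Q'')$, whose parameter relation $1/q = 1/(2(p+\delta)) + (p-1)/(2p)$ recovers exactly the theorem's exponent $q$. Together with Theorem~\ref{T2.8KL} this gives the pointwise-in-time estimate
\[
\|u(t_2)-u(t_1)\|_{L^q(Q')} \leq C \bigl(\|u(t_2)\|_{W^{1,p+\delta}(Q'')}+\|u(t_1)\|_{W^{1,p+\delta}(Q'')}\bigr)^{1/2}|t_2-t_1|^{1/(2p)}.
\]

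Since the Sobolev multiplier lies only in $L^{2(p+\delta)}(I')$ rather than $L^\infty(I')$, I would upgrade to a uniform H\"older estimate by passing to a vector-valued Besov bound: taking the $L^{2(p+\delta)}(I')$-norm in $t$ of $\|u(t+h)-u(t)\|_{L^q(Q')}$ and applying Minkowski's inequality shows that $u \in B^{1/(2p)}_{2(p+\delta),\infty}(I';L^q(Q'))$. The Besov--H\"older embedding $B^s_{r,\infty}(I) \hookrightarrow C^{s-1/r}(I)$, valid because $sr=(p+\delta)/p>1$, then delivers exactly $u \in C^{\alpha}(I';L^q(Q'))$ with $\alpha = 1/(2p) - 1/(2(p+\delta))$.

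For the a.e.\ pointwise statement, I would localize the test-function argument: for $x_0 \in Q'$ in a full-measure subset, test against a smooth mollifier $\psi_r(\cdot - x_0)$ concentrated at $x_0$ on scale $r>0$. Combining the standard oscillation bound $|u(t,x_0)-(\psi_r * u(t))(x_0)| \lesssim r\, M|\nabla u(t,\cdot)|(x_0)$ at Lebesgue points with the localized form of the step-one estimate gives
\[
|u(t_2,x_0)-u(t_1,x_0)| \lesssim r\bigl(M|\nabla u(t_1,\cdot)|(x_0)+M|\nabla u(t_2,\cdot)|(x_0)\bigr) + r^{-1}|t_2-t_1|^{1/p}F(x_0),
\]
where $F \in L^{p'}(Q')$ is a time-integrated maximal function of $|A|$; optimizing over $r$ and repeating the Besov-type reasoning pointwise in $x_0$ yields the $\alpha$-H\"older continuity of $t \mapsto u(t,x_0)$ for a.e.\ $x_0 \in Q'$. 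The hard part is the critical nature of the Besov embedding: a naive use of fractional Sobolev seminorms with classical Sobolev embedding in time would only produce $C^{\alpha-\epsilon}$ for each $\epsilon>0$, and passing through the $\infty$-index Besov space $B^{1/(2p)}_{2(p+\delta),\infty}$ is essential to land exactly at the critical exponent $\alpha$ stated in the theorem.
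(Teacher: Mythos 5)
Your proposal is correct in outline but takes a genuinely different route from the paper's self-contained proof. You extract the modulus of continuity $\|u(t_2)-u(t_1)\|_{W^{-1,p'}(Q'')}\lesssim|t_2-t_1|^{1/p}$ directly from the weak formulation, interpolate pointwise in time at the midpoint $[W^{1,p+\delta},W^{-1,p'}]_{1/2}=L^{q}$ against the Kinnunen--Lewis bound, and then upgrade to H\"older continuity via the real-variable embedding $B^{1/(2p)}_{2(p+\delta),\infty}(I')\hookrightarrow C^{\alpha}(I')$; you are right that the secondary index $\infty$ is what lets you land exactly on $\alpha$. The paper instead mollifies $\chi u$, encodes both facts $u\in L^{p+\delta}W^{1,p+\delta}$ and $\partial_t u\in L^{p'}W^{-1,p'}$ as global Bessel-potential bounds on $\mathbb{R}^{d+1}$ (Lemma~\ref{tdervbound.lem}), proves an off-diagonal complex interpolation inequality for the mixed space--time potential from scratch via the scalar Mihlin theorem and Hadamard's three lines theorem (Proposition~\ref{besselinterp.prop}), obtains $J_t^{-1/2}v_\epsilon\in L^q(\mathbb{R}^{d+1})$, and finishes with a Fourier-analytic H\"older criterion on time lines. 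Your route is closer to the abstract strategy sketched in the introduction and is conceptually transparent, but it treats as black boxes two things the paper deliberately avoids or proves elementarily: the off-diagonal complex interpolation of Bessel potential scales with different integrability exponents, and its validity on the bounded domain $Q''$ (the pair $(W^{1,p+\delta}(Q''),(W^{1,p}_0(Q''))^{*})$ is not an obvious retract of the free-space scales; one should multiply by a cutoff supported in $Q''$, extend by zero, and apply the $\mathbb{R}^d$ version). For the a.e.\ time-line statement, be aware that the pointwise Sobolev bound $|u(t,x_0)-(\psi_r*u(t))(x_0)|\lesssim r\,M|\nabla u(t,\cdot)|(x_0)$ holds only at Lebesgue points of $u(t,\cdot)$, a set depending on $t$; a Fubini argument is needed to fix a full-measure set of $x_0$ working for a.e.\ $t$ simultaneously. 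The paper sidesteps this by mollifying first so that all pointwise estimates are exact for $v_\epsilon$, and only then passing to the limit.
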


Experts in interpolation theory of vector-valued Triebel--Lizorkin type spaces (see \cite{Denk-Kaip}) will realize that Theorem~\ref{mainthrm.thrm} can be obtained from midway complex interpolation of the smoothness properties
\begin{align*}
u \in L^{p + \delta}(I';W^{1,p + \delta}(Q')) \quad \& \quad 
u \in W^{1,p'}(I';W^{-1,p'}(Q')),
\end{align*}
where the second one follows from the equation \eqref{pparatype.eq}. Since $2/q = 1/(p+\delta) + 1/p'$, we find $u \in H^{1/2,q}(I'; L^q(Q'))$. As time is a one-dimensional variable, this breaks the threshold $1-q/2 < 0$ in embeddings of such vector-valued Bessel potential spaces and leads to H\"older continuity. Still, we believe that an elementary and self-contained proof to deduce Theorem~\ref{mainthrm.thrm} from Theorem~\ref{T2.8KL} will be of interest for a broader audience and the purpose of our note is to provide such an argument. 

The abstract strategy sketched above is our guide in doing so. First, we smooth and localize the weak solution $u$ and use the equation to write the $t$-derivative of the approximant as a global negative order Bessel potential (Lemma \ref{tdervbound.lem}). Second, we use the scalar-valued Mihlin Fourier multiplier theorem and Hadamard's three lines theorem from complex analysis to bound a fractional order potential (Proposition~\ref{besselinterp.prop}). Third, a Fourier analytic characterization of H\"older continuity can be used to obtain the desired regularity of the approximant and further that of the local solution itself (Section~\ref{proof.sec}).

We close this introduction with a brief comparison to our previous work with P.~Auscher in \cite{ABES1}, where we obtained regularity as in Theorem~\ref{mainthrm.thrm} for linear operators and $p=2$ by a more involved approach. See also \cite{zaton} for a generalization to higher order systems.
The flexibility in the definition of the structure functions $A$ and $B$, allows us to use Theorem~\ref{mainthrm.thrm} for inhomogenous linear systems of the form
\[\frac{\partial u_i}{\partial t} - \div A_i(t,x, \nabla u) - B_i(t,x,\nabla u) = \div F_i + f_i, \quad i = 1, \dots, N, \quad \text{ in $I \times Q$},\]
where $F, f \in L^{2 + \eta}$ for some $\eta > 0$. The condition on $f$ is more restrictive than in \cite{ABES1}. This is needed here -- as in the classical Lions theory~\cite{Li57} -- because we use $u \in W^{1,p'}(I'; W^{-1,p'}(Q'))$ as \emph{a priori} information.
\bigskip

\noindent 
\textbf{Acknowledgement.} This research was supported by the CNRS through a PEPS
JCJC project and by DFG through DFG SFB 1060 and DFG EXC 2047.

\section{Preliminaries}
\label{sec:prelim}

\subsection{Structural assumptions}
\label{subsec.structure}
We summarize the assumptions of \cite{KL}. The matrix-valued function $A: I \times Q \times (\rd)^N  \to \re^{d \times N}$ has columns given by
	\[A_i = A_i(t,x,V), \quad i = 1, \ldots, N \]
and the vector-valued function $B : I \times Q \times (\rd)^N   \to \re^N $ has scalar entries
	\[B_i = B_i(t,x, V), \quad i = 1, \ldots, N.\]
Both are (Lebesgue) $(d+1)$-measurable functions on $I \times Q$, whenever $V = V(t,x)$ is $(d+1)$-measurable on $I \times Q$. For example, $A$ and $B$ could be of Carath\'{e}odory type. We also assume there are positive constants $c_j$, $j=1,2,3$, such that for almost every $(t,x) \in I \times Q$ and every $V \in (\rd)^N $,
\begin{align*}
	|A_i(t,x, V)| &\le c_1|V|^{p-1} + h_1(t,x),\\
	|B_i(t,x, V)| &\le c_2|V|^{p-1} + h_2(t,x),
\intertext{for $i = 1,\dots N$ and}
	\sum_{i = 1}^N\langle A_i(t,x, V), V_i &\rangle  \ge c_3 |V|^p - h_3(t,x).
\end{align*}
Here $\langle \cdot, \cdot \rangle$ is the standard inner product on $\rd$ and $h_j$, $j = 1,2,3,$ are measurable functions on $I \times Q$ satisfying
\[\|(|h_1| + |h_2|)^{p/(p-1)} + |h_3|\|_{L^{\hat{q}}(I \times Q)} = c_4 < \infty,\]
for some $\hat{q} > 1$.

\subsection{Weak solutions}
\label{subsec.weak}
The space $L^{p}(I; W^{1,p}(Q))$ consists of all functions $f \in L^{p}(I \times Q)$ so that for almost every $t \in I$ the function $f(t, \cdot)$ is in the usual Sobolev space $W^{1,p}(Q)$ and 
\[\|f\|_{L^{p}(I; W^{1,p}(Q))} := \|f\|_{L^{p}(I \times Q)} + \||\nabla f|\|_{L^{p}(I \times Q)} < \infty. \]
We use the same notation for $\mathbb{R}^{N}$ valued functions with the obvious interpretation. We then say that $u$ is a weak solution to \eqref{pparatype.eq} if $u \in L^2(I \times Q) \cap L^p(I;W^{1,p}(Q))$ and if 
\[\int_{I} \int_{Q} \sum_{i=1}^N \left(-u_i\frac{\partial \phi_i}{\partial t}  +\langle A_i(t,x,\nabla u), \nabla \phi_i \rangle - B_i(t,x,\nabla u)\phi_i \right) \; dx \, dt = 0\]
holds for all $\phi = (\phi_1, \dots, \phi_N) \in C^\infty_c(I \times Q)$.

\subsection{Potential spaces}
\label{subsec.potential}
We define the Fourier transform on the Schwartz space $\cS(\rdd; \mathbb{C})$ as usual by
\begin{align*}
\cF f(\tau, \xi) = \iint e^{-\i \tau t - \i\langle \xi, x \rangle} f(t,x) \; d x \, d t
\end{align*}
and extend it to the tempered distributions by duality. The partial Fourier transforms with respect to only space or time variables are denoted by the subscripts $x$ and $t$. We define the Bessel potentials of order $s \in \mathbb{C}$ through
\begin{align*}
J^{s} f & = \cF^{-1} ( (1+ |\cdot|^{2} )^{-s/2} \cF f).
\end{align*}
Again, a subscript $x$ or $t$ tells with respect to which variable the potential is taken. If $s > 0$ and $f \in L^p(\rd)$ for some $p \in (1,\infty)$, then $J_x^s f$ is given as a convolution with an integrable function
\begin{align}
\label{potential-formula.eq}
J_x^s f = G_x^s \ast_x f, \qquad G_x^s (x) = \frac{1}{(4 \pi)^{\frac{s}{2}} \Gamma(\frac{s}{2})} \int_0^{\infty} \delta^{\frac{s-d}{2}} \e^{\frac{-\pi |x|^2}{\delta} - \frac{\delta}{4\pi}} \; \frac{d \delta}{\delta}.
\end{align}
See Section V.3.1 in \cite{Stein} for this classical formula. The Bessel potential space $H^{1,p}(\rd) = \{J_x^1 f: f \in L^{p}(\rd)\}$ with norm $g \mapsto \| J_x^{-1}g \|_{L^{p}(\rd)}$ coincides with $W^{1,p}(\rd)$ up to equivalence of norms. See Section V.3.3 in \cite{Stein}.

\subsection{Mollification}
\label{subsec.molli}
The definition of weak solutions does not imply any \emph{a priori} regularity in time direction. This causes technical problems, which in the context of this paper can be overcome through a mollification argument. Let $\varphi \in C_c^\infty(\rdd)$ be an even function with integral one that we fix from this point on. 
For $ \epsilon >0$ we denote the mollification of a function $g$ with $\varphi$ by
\[g_\epsilon(t,x) := \iint \frac{1}{\epsilon^{n+1}} \varphi \left( \frac{t - s}{\epsilon}, \frac{x-y}{\epsilon} \right) g(s,y) \; dy \, ds .  \]

\section{A priori potential estimate}
\label{tdersec.sec}

\noindent We rephrase integrability and differentiability of localized weak solutions to \eqref{pparatype.eq} using Bessel potentials. The first inequality below is a reference to Theorem \ref{T2.8KL} whereas the second one expresses the regularity of the time derivative of the localized solution that follows from the equation. We call a constant \emph{admissible} if it depends on $p$, $d$, $N$, $I$, $Q$, $c_1,\ldots, c_4$, $\|u\|_{L^2(I \times Q)}$, $\|u\|_{L^p(I; W^{1,p}(Q))}$ and the chosen cut-off function $\chi \in C_c^{\infty}( I\times Q; \re)$. 

\begin{lemma}
\label{tdervbound.lem} 
Let $\chi \in  C_c^{\infty}( I\times Q )$. Let $u$ be a weak solution to \eqref{pparatype.eq} in $I \times Q$ and define $v := \chi u$. Then there is an admissible constant $C$ such that for any $\epsilon > 0$,
\begin{align*}
\| J_{x}^{-1} (v_\epsilon) \|_{L^{p+\delta}(\rdd)} &\leq C, \\
\| J_{t}^{-1} J_{x}^{1} (v_\epsilon) \|_{L^{p'}(\rdd)} &\leq C.
\end{align*}
\end{lemma}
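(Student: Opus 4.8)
The plan is to establish the two estimates separately. The first one is essentially a reformulation of Theorem~\ref{T2.8KL}. Since $v = \chi u$ is compactly supported in $I \times Q$, we may regard it as a function on $\rdd$. For fixed time $t$, the spatial function $v_\epsilon(t,\cdot)$ is smooth and compactly supported, so $J_x^{-1}(v_\epsilon)(t,\cdot) = J_x^{-1}(v_\epsilon(t,\cdot))$ makes sense and, by the identification $H^{1,p+\delta}(\rd) = W^{1,p+\delta}(\rd)$ recalled in Section~\ref{subsec.potential}, its $L^{p+\delta}(\rd)$ norm is comparable to $\|v_\epsilon(t,\cdot)\|_{W^{1,p+\delta}(\rd)}$. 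Integrating the $(p+\delta)$-th power in $t$, using that convolution with the fixed mollifier $\varphi$ is bounded on $L^{p+\delta}$ (with norm independent of $\epsilon$) and that $v = \chi u$ with $\chi$ smooth and compactly supported, the quantity is controlled by $\|v\|_{L^{p+\delta}(I';W^{1,p+\delta}(Q'))}$ for suitable $I' \Subset I$, $Q' \Subset Q$ containing $\supp \chi$, which is admissible by Theorem~\ref{T2.8KL} together with the product rule estimate $|\nabla v| \le |\nabla \chi|\,|u| + |\chi|\,|\nabla u|$ and H\"older's inequality (to absorb the lower-order term $\chi u$ into the $L^{p+\delta}$-bound).

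For the second estimate, I would first compute the distributional time derivative of $v_\epsilon$. Testing the weak formulation against $\phi$ with $\phi_i$ obtained by mollifying $\chi$ times a test function, or equivalently differentiating $v = \chi u$ and using the equation, gives
\[
\partial_t v_\epsilon = \bigl( \chi_t u \bigr)_\epsilon + \div_x \bigl( \chi A(\cdot,\cdot,\nabla u) \bigr)_\epsilon - \bigl( \langle \nabla \chi, A(\cdot,\cdot,\nabla u) \rangle \bigr)_\epsilon + \bigl( \chi B(\cdot,\cdot,\nabla u) \bigr)_\epsilon
\]
in the sense of distributions on $\rdd$ (componentwise for $i=1,\dots,N$); here the mollification commutes with the derivatives. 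Now I want to bound $\|J_t^{-1} J_x^1 v_\epsilon\|_{L^{p'}}$. Writing $J_t^{-1} J_x^1 v_\epsilon = J_t^{-1} J_x^{-1} (J_x^2 v_\epsilon)$ and using that $J_t^{-1}$ smooths by one time-derivative, the right strategy is: $J_x^1 v_\epsilon = J_x^1 J_t^{1} (J_t^{-1} v_\epsilon)$, and since $J_t^{-1}$ acts like integration in time we expect $J_t^{-1} J_x^1 v_\epsilon$ to be governed by $J_x^{1} J_t^{-1} \partial_t v_\epsilon$ up to a harmless term. More precisely, since $\partial_t = J_t^1 \cdot (\text{bounded multiplier}) \cdot (\text{something})$—cleanly, $\i\tau = \i\tau(1+\tau^2)^{-1/2}(1+\tau^2)^{1/2}$ so the multiplier $\i\tau(1+|\cdot|^2)^{-1/2}$ is bounded—one has $\|J_t^{-1} g\|_{L^{p'}} \lesssim \|g\|_{L^{p'}} + \|J_t^{-1}\partial_t g\|_{L^{p'}}$ by splitting the frequency variable $\tau$ into $|\tau|\le 1$ and $|\tau|>1$ and invoking the Mihlin multiplier theorem in the $t$-variable. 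Applying this with $g = J_x^1 v_\epsilon$ reduces matters to bounding $\|J_x^1 v_\epsilon\|_{L^{p'}}$ (controlled as in the first estimate, since $p' \le p+\delta$ on the relevant time interval for the exponent range, or directly since $v_\epsilon$ is a nice compactly supported function—one must be slightly careful with the exponent, so it may be cleaner to bound everything in $L^{p'}$ from the start) and $\|J_t^{-1} J_x^1 \partial_t v_\epsilon\|_{L^{p'}}$.

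For this last term I substitute the formula for $\partial_t v_\epsilon$. The divergence term contributes $J_t^{-1} J_x^1 \div_x(\chi A(\cdot,\cdot,\nabla u))_\epsilon$; since $J_x^1 \div_x$ is a zeroth-order operator in $x$ (the multiplier $\i\xi(1+|\xi|^2)^{1/2}$... no—$J_x^1$ lowers order by one and $\div_x$ raises by one, so $J_x^1 \div_x$ has the bounded vector multiplier $\i\xi(1+|\xi|^2)^{-1/2}$, though with a component index to contract), by Mihlin this is bounded on $L^{p'}$, and then $\|J_t^{-1}(\cdot)\|_{L^{p'}} \lesssim \|\cdot\|_{L^{p'}}$ since $J_t^{-1}$ has a bounded symbol; so this term is controlled by $\|\chi A(\cdot,\cdot,\nabla u)\|_{L^{p'}}$, which by the growth bound $|A_i| \le c_1|\nabla u|^{p-1} + h_1$ is bounded by $\|\nabla u\|_{L^{p}}^{p-1} + \|h_1\|_{L^{p'}}$—admissible, using $(p-1)p' = p$ and that $h_1 \in L^{p/(p-1)}$ locally from the $c_4$ assumption. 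The terms involving $\chi_t u$, $\langle\nabla\chi,A\rangle$ and $\chi B$ are even easier: no $\div_x$ is present, so $J_x^1$ genuinely smooths, and their $L^{p'}$ norms (hence their images under the bounded operators $J_t^{-1}J_x^1$) are bounded by $\|u\|_{L^{p'}}$, $\|\nabla u\|_{L^p}^{p-1} + \|h_1\|_{L^{p'}}$, and $\|\nabla u\|_{L^p}^{p-1} + \|h_2\|_{L^{p'}}$ respectively—all admissible. Throughout, the mollifiers are handled by Young's inequality, giving constants independent of $\epsilon$, and one uses that all functions in sight are supported in a fixed compact subset of $I\times Q$ slightly enlarging $\supp\chi$, so that local higher integrability and the local bound on the $h_j$ apply.

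\textbf{Main obstacle.} The delicate point is the bookkeeping with the Bessel potentials: one must verify that commuting $J_x^{\pm 1}$, $J_t^{-1}$, $\partial_t$, $\div_x$ and the mollification is legitimate (all objects are tempered distributions with the relevant Fourier-analytic identities, and $v_\epsilon$ is Schwartz in $x$ for each $t$ and smooth with compact support in $t$, so everything is rigorous), and that the composite Fourier multipliers appearing—namely $\i\xi_k(1+|\xi|^2)^{-1/2}$, $\i\tau(1+\tau^2)^{-1/2}$, and $(1+\tau^2)^{-1/2}$—all satisfy Mihlin's condition on $\rdd$ (or in the single relevant variable), so that they are bounded on $L^{p'}(\rdd)$ for $1<p'<\infty$. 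The structural growth conditions on $A$ and $B$ then convert everything into admissible quantities via H\"older's inequality and the exponent identity $(p-1)p'=p$.
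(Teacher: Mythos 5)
Your treatment of the first estimate matches the paper's. For the second estimate you follow a different but structurally legitimate route: you differentiate $v_\epsilon$ directly via the equation, split off the divergence term, and use the multiplier inequality $\|J_t^{-1}g\|_{L^{p'}} \lesssim \|g\|_{L^{p'}} + \|J_t^{-1}\partial_t g\|_{L^{p'}}$. The treatment of the divergence term is correct: $J_x^1 \div_x$ and $J_t^{-1}$ are Mihlin-bounded, $|A_i| \lesssim |\nabla u|^{p-1} + h_1$, and $(p-1)p' = p$ does the rest. That part is sound.

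The genuine gap is in the two ``zeroth-order'' quantities: $\|J_x^1 v_\epsilon\|_{L^{p'}(\rdd)}$ and the contribution of $(\partial_t\chi\, u)_\epsilon$. You propose to control the first ``as in the first estimate, since $p' \le p+\delta$'' or ``directly in $L^{p'}$''. Neither closes the gap for the full range $2d/(d+2) < p < \infty$. When $p<2$ one has $p'>2$, while $p+\delta$ may be below $2$, so $p' \le p+\delta$ fails; and there is no \emph{a priori} admissible bound on $\|u\|_{L^{p'}}$ near the threshold $p \approx 2d/(d+2)$ from $u \in L^2 \cap L^p(W^{1,p})$ alone (the parabolic embedding $L^2 \cap L^p(W^{1,p}) \subseteq L^{p(d+2)/d}$ only yields $L^{p'}$ when $p \ge (2d+2)/(d+2)$, which is strictly above the threshold). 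What is actually needed — and what the paper uses — is the Caccioppoli inequality (Lemma~3.2 in \cite{KL}) to obtain the admissible bound $\|u\|_{L^\infty(I;L^2(Q))} \lesssim \|u\|_{L^2(I\times Q)} + \|u\|_{L^p(I;W^{1,p}(Q))}$, combined with the Sobolev embedding $W^{1,p}(Q) \subseteq L^2(Q)$, whose validity is precisely the hypothesis $p \ge 2d/(d+2)$. In the paper this enters by pairing against test functions normalized in $L^p(\re;W^{1,p}(\rd))$; in your formulation the analogue would be to use the genuine smoothing $J_x^1 : L^2(\rd) \to L^{p'}(\rd)$, bounded exactly under $p \ge 2d/(d+2)$, together with the $L^\infty(L^2)$ Caccioppoli bound. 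Without invoking Caccioppoli (which appears nowhere in your proposal) these two terms are not admissibly bounded, so as written the argument does not go through for $p$ close to $2d/(d+2)$. The same issue affects your stated estimate of $(\partial_t\chi\, u)_\epsilon$ ``by $\|u\|_{L^{p'}}$'', which contradicts your own (correct) remark that here $J_x^1$ should be used as a smoothing operator, not merely a bounded one.
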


\begin{proof}
We use the symbol $\lesssim$ for inequalities that hold up to a multiplicative admissible constant. We obtain from Young's inequality, the choice of $\chi$ and Theorem~\ref{T2.8KL} that
\begin{align*}
\|v_\epsilon\|_{L^{p+\delta}(\re; W^{1,p+\delta}(\rd))} 
\lesssim \|u\|_{L^{p+\delta}(I; W^{1,p+\delta}(Q))} 
\leq C.
\end{align*}
By coincidence of Sobolev and potential spaces, the left-hand side is comparable to $\| J_{x}^{-1} (v_\epsilon) \|_{L^{p+\delta}(\rdd)}$. Hence, we have the first estimate.

To prepare the second estimate, we fix $\phi \in \cS(\rdd; \mathbb{R}^{N})$ normalized such that $\|\phi\|_{L^p(\re; W^{1,p}(\rd))} = 1$. By H\"older's inequality we have that
\begin{align*}
\bigg|\iint \langle v_\epsilon, \phi \rangle  \; dx \, dt \bigg|
= \bigg|\iint \langle v, \phi_\epsilon \rangle  \; dx \, dt \bigg|
&\lesssim \|u \|_{L^\infty(I; L^2(Q))} \| \phi_{\epsilon }\|_{L^p(I; L^{2}(Q))}.
\end{align*}
The Caccioppoli inequality (Lemma~3.2 in \cite{KL} with $a=0$) yields
\begin{align*}
\|u \|_{L^\infty(I; L^2(Q))} \lesssim \|u \|_{L^2(I \times Q)} + \|u \|_{L^p(I; W^{1,p}(Q))}.
\end{align*}
Since $p \geq \frac{2d}{d + 2}$, we have the Sobolev embedding $W^{1,p}(Q) \subseteq L^{2}(Q)$. Thus,
\begin{align*}
\|\phi_{\epsilon }\|_{L^p(I; L^{2}(Q))} \lesssim \|\phi_{\epsilon }\|_{L^p(I; W^{1,p}(Q))} \leq \|\phi\|_{L^p(\re; W^{1,p}(\rd))} = 1.
\end{align*}
Altogether, we have found that
\begin{align}
\label{tdervbound.lem.eq1}
\bigg|\iint \langle v_\epsilon, \phi \rangle  \; dx \, dt \bigg| \lesssim \|u \|_{L^2(I \times Q)} + \|u \|_{L^p(I; W^{1,p}(Q))}.
\end{align}
Next, we get from the equation for $u$, using the summation convention for $i=1,\ldots, N$ and omitting the variable of integration $dx\, dt$ for the sake of readability, 
\begin{align*}
-\iint \langle \partial_t (v_\epsilon), \phi \rangle 
	&= \iint \bigg(\langle u, \partial_t (\chi   \phi_\epsilon)\rangle -  \langle u, (\partial_t\chi)   \phi_{\epsilon} \rangle \bigg)  \\
	&= \iint \bigg(\langle A_i(t,x,\nabla u), \nabla (\chi  \phi_{\epsilon})_i \rangle - B_i(t,x,\nabla u)(\chi   \phi_{\epsilon} )_i 
			- \langle u, (\partial_t\chi) \phi_{\epsilon}\rangle \bigg)  \\
    &= \iint \langle  (\chi A_i(t,x,\nabla u))_{\epsilon}, \nabla \phi_i \rangle
			+ \iint \langle A_i(t,x,\nabla u), (\nabla \chi)  (\phi_{\epsilon})_i \rangle  \\ 
    &\quad - \iint B_i(t,x,\nabla u)(\chi \phi_{\epsilon})_i 
 			- \iint \langle u, (\partial_t\chi)   \phi_{\epsilon} \rangle  \\ 
 	&=: \mathrm{I} + \mathrm{II} - \mathrm{III} - \mathrm{IV}.
\end{align*}
Using H\"older's inequality and the upper bound for $A$, we have
\[
|\mathrm{I}| 
	\lesssim (c_4 + c_1\|\nabla u\|_{L^p(I \times Q)}^{p-1}) \| \nabla \phi\|_{L^p(I \times Q)} \leq c_4 + c_1\|\nabla u\|_{L^p(I \times Q)}^{p-1}.
\]
Similarly, replacing $\nabla \phi$ by $\phi$, we get
\[
|\mathrm{II}| + |\mathrm{III}| \lesssim c_4 + (c_1+c_2)\|\nabla u\|_{L^p(I \times Q)}^{p-1}.
\]
For $\mathrm{IV}$, we can argue as in \eqref{tdervbound.lem.eq1} with $\phi_\epsilon$ replaced by $(\partial_t \chi)\phi_\epsilon$, in order to give 
\[
|\mathrm{IV}| \lesssim \|u \|_{L^2(I \times Q)} + \|u \|_{L^p(I; W^{1,p}(Q))}.
\]
Summarizing these estimates, we get
\[
\bigg|\iint \langle v_\epsilon, \phi \rangle  \; dx \, dt \bigg| + \bigg|\iint \langle \partial_t (v_\epsilon), \phi \rangle  \; dx \, dt \bigg| \leq C
\]
This is true for any $\phi \in \cS(\rdd)$ normalized in $L^p(\re; W^{1,p}(\rd))$. In view of the equivalence of Sobolev and potential spaces on $\rd$, this is the same as taking $\phi = J_x^1 \psi$, where $\psi \in \cS(\rdd)$ is normalized in $L^p(\rdd)$. Hence, we get
\[
\bigg|\iint \langle J_x^{1}v_\epsilon, \psi \rangle  \; dx \, dt \bigg| + \bigg|\iint \langle \partial_t (J_x^1 v_\epsilon), \psi \rangle  \; dx \, dt \bigg| \leq C
\]
Since  $\cS(\rdd)$ is dense in $L^{p'}(\rdd)$, we obtain
\[
\|J_x^1 v_\eps\|_{L^p(\rdd)} + \|\partial_t (J_x^1 v_\eps)\|_{L^p(\rdd)} \leq C
\]
Now, we invoke the equivalence of Sobolev and potential spaces in $t$ and apply Fubini's theorem to conclude the bound for $\|J_t^{-1} J_x^1 (v_\epsilon)\|_{L^p(\rdd)}$.
\end{proof}

\section{Interpolation estimate of a mixed potentials}

\noindent We begin by recalling (a simple version of) the Mihlin multiplier theorem.

\begin{proposition}[Theorem 8.2 in {\cite{M-S}}]
\label{multthrm.thrm}
Let $n\geq 1$, let $m: \mathbb{R}^{n} \to \mathbb{C}$ satisfy, for all multi-index of order $|\alpha| \leq n+2$ and all $\xi \neq 0$,
\[
|\partial^\alpha_{\xi}m(\xi)| \le M  |\xi|^{-|\alpha|}.
\]
Then, for any $q \in (1,\infty)$ there is a constant $C=C(n,q)$, such that for all $\phi \in \cS(\re^n)$ and for $\cF$ the Fourier transform on $\cS(\re^n)$,
\[
\|\cF^{-1}(m \cF \phi)\|_{L^q(\re^n)} \leq CM \|\phi\|_{L^q(\re^n)}.
\]
\end{proposition}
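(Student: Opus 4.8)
\noindent The plan is to identify the operator $T_m\phi:=\cF^{-1}(m\cF\phi)$ as a Calder\'on--Zygmund operator and then to invoke the classical $L^q$ theory for such operators. Taking $\alpha=0$ in the hypothesis gives $\|m\|_{L^\infty(\re^n)}\le M$, so Plancherel's theorem yields $\|T_m\phi\|_{L^2(\re^n)}\le M\|\phi\|_{L^2(\re^n)}$. The substantial point is to show that the convolution kernel $K$ of $T_m$, which away from the origin agrees with a locally integrable function, satisfies H\"ormander's integral condition
\[
\int_{|x|>2|y|}|K(x-y)-K(x)|\,dx\le C(n)\,M,\qquad y\neq0.
\]
Granting this, the Calder\'on--Zygmund theorem gives that $T_m$ is of weak type $(1,1)$ with constant $C(n)M$; Marcinkiewicz interpolation with the $L^2$ bound then yields boundedness on $L^q$ with constant $C(n,q)M$ for $1<q\le2$; and since the adjoint $T_m^\ast$ is again a Fourier multiplier operator whose symbol obeys the same bounds, the range $2\le q<\infty$ follows by duality. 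This is the route I would take.

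For the kernel estimate I would use a Littlewood--Paley partition of unity: pick $\psi\in C_c^\infty(\re^n)$ supported in the annulus $\{1/2<|\xi|<2\}$ with $\sum_{j\in\mathbb{Z}}\psi(2^{-j}\xi)=1$ for $\xi\neq0$, and set $m_j(\xi):=m(\xi)\psi(2^{-j}\xi)$ and $K_j:=\cF^{-1}m_j$, so that $K=\sum_j K_j$ off the origin. The product rule together with the hypothesis shows that the rescaled symbols $a_j(\xi):=m_j(2^j\xi)$ are all supported in one fixed annulus and satisfy $|\partial^\alpha a_j(\xi)|\le C(n)M$ for every $|\alpha|\le n+2$, uniformly in $j$. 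Integrating by parts $n+2$ times in the formula for $A_j:=\cF^{-1}a_j$, and noting that $\xi_k a_j$ enjoys bounds of the same type, one obtains the pointwise estimates $(1+|x|)^{n+2}\bigl(|A_j(x)|+|\nabla A_j(x)|\bigr)\le C(n)M$; since $(1+|x|)^{-(n+1)}$ is integrable over $\re^n$, this gives $\int_{\re^n}(1+|x|)\bigl(|A_j(x)|+|\nabla A_j(x)|\bigr)\,dx\le C(n)M$. Undoing the scaling through $K_j(x)=2^{jn}A_j(2^jx)$ then yields
\[
\|K_j\|_{L^1(\re^n)}\le C(n)M,\qquad \int_{\re^n}|x|\,|K_j(x)|\,dx\le C(n)M\,2^{-j},\qquad \|\nabla K_j\|_{L^1(\re^n)}\le C(n)M\,2^{j}.
\]

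With these estimates the H\"ormander condition follows by splitting $\sum_{j}\int_{|x|>2|y|}|K_j(x-y)-K_j(x)|\,dx$ at the index where $2^j|y|\sim1$. For $2^j|y|\le1$ the mean value theorem bounds the $j$-th summand by $|y|\,\|\nabla K_j\|_{L^1(\re^n)}\le C(n)M\,2^j|y|$, and the geometric series over these $j$ sums to $C(n)M$. For $2^j|y|>1$, bounding separately the integrals of $|K_j(x)|$ and of $|K_j(x-y)|$ over $\{|x|>2|y|\}$ and using that $|x|>2|y|$, respectively $|x-y|>|y|$, on these sets, the $j$-th summand is at most $|y|^{-1}\int_{\re^n}|z|\,|K_j(z)|\,dz\le C(n)M\,2^{-j}|y|^{-1}$, and again the geometric series sums to $C(n)M$. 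Combining the two ranges gives the asserted bound. I expect the main obstacle to be this middle step --- establishing the dyadic kernel estimates with the correct powers of $2^{\pm j}$ and the weighted $L^1$ bound, where the assumption $|\alpha|\le n+2$ is exactly what makes the integration-by-parts exponents and the ensuing weighted integrals converge --- whereas the passage from H\"ormander's condition to the stated $L^q$ inequality with constant $C(n,q)M$ is textbook Calder\'on--Zygmund theory.
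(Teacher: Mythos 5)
The paper does not prove this proposition; it is quoted as Theorem~8.2 of Muscalu--Schlag \cite{M-S} and used as a black box. Your argument is correct and is essentially the proof given in that reference: a Littlewood--Paley decomposition of the symbol, rescaling to a fixed annulus to convert the Mihlin bounds with $|\alpha|\le n+2$ into uniform derivative bounds on $a_j$, integration by parts to get $(1+|x|)^{n+2}$ decay on $A_j$ and $\nabla A_j$, the resulting dyadic kernel bounds $\|K_j\|_{L^1}\lesssim M$, $\||x|K_j\|_{L^1}\lesssim M2^{-j}$, $\|\nabla K_j\|_{L^1}\lesssim M2^{j}$, the split at $2^j|y|\sim 1$ to verify H\"ormander's condition, and then Calder\'on--Zygmund theory, Marcinkiewicz interpolation, and duality. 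The exponent $n+2$ in the hypothesis is used exactly where you flag it, to make $(1+|x|)|A_j|$ integrable, so the argument is tight with respect to the stated assumption.
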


The multiplier theorem entails quantitative bounds for Bessel potentials.

\begin{lemma}
\label{BesselBound.lem}
Let $a \in [0,\infty)$ and $b \in \re$. For all $q \in (1,\infty)$ there is a constant $C = C(d,q)$ such that for all $\phi \in \cS(\rd)$,
\[
\|J_x^{2a+2\i b} \phi \|_{L^q(\rd)} \leq C (1+a+|b|)^{d+2} \|\phi\|_{L^q(\rd)}.
\]
The same holds for $J_t^{2a + 2\i b}$ upon replacing $d$ by $1$.
\end{lemma}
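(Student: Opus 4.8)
The plan is to reduce the claim to the scalar-valued Mihlin multiplier theorem (Proposition~\ref{multthrm.thrm}) applied to the symbol $m(\xi) = (1+|\xi|^2)^{-(a+\i b)}$ on $\re^d$, and then to verify that the Mihlin constant $M$ for this symbol is controlled by $(1+a+|b|)^{d+2}$. Since $J_x^{2a+2\i b}\phi = \cF^{-1}(m\,\cF\phi)$, once the symbol bound
\[
|\partial_\xi^\alpha m(\xi)| \le M\,|\xi|^{-|\alpha|}, \qquad |\alpha| \le d+2, \ \xi \ne 0,
\]
is established with $M \lesssim_d (1+a+|b|)^{d+2}$, Proposition~\ref{multthrm.thrm} (with $n=d$) gives exactly the asserted inequality. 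The statement for $J_t$ is the same computation with $d$ replaced by $1$, so it suffices to do the $\re^d$ case.

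The heart of the matter is therefore the derivative estimate on $m(\xi) = (1+|\xi|^2)^{-(a+\i b)}$. First I would note $|m(\xi)| = (1+|\xi|^2)^{-a} \le 1$ since $a \ge 0$, which handles $\alpha = 0$. For $|\alpha| \ge 1$, I would differentiate using the chain rule: writing $m = e^{-(a+\i b)\log(1+|\xi|^2)}$, each $\xi$-derivative either hits the exponential (producing a factor of $-(a+\i b)$ times a derivative of $\log(1+|\xi|^2)$) or one of the already-accumulated logarithmic-derivative factors. A clean way to organize this is the Faà di Bruno / Leibniz bookkeeping: $\partial^\alpha m$ is a finite sum (with combinatorial coefficients depending only on $\alpha$, hence only on $d$) of terms of the form
\[
(a+\i b)^k \,(1+|\xi|^2)^{-(a+\i b)} \prod_{j=1}^k \partial^{\beta_j}\big(\log(1+|\xi|^2)\big),
\]
with $1 \le k \le |\alpha|$ and $\sum_j \beta_j = \alpha$, $|\beta_j|\ge 1$. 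Using $|(1+|\xi|^2)^{-(a+\i b)}| \le 1$ and the elementary bounds $|\partial^\beta \log(1+|\xi|^2)| \lesssim_d \min(1,|\xi|)^{-|\beta|} \le |\xi|^{-|\beta|}$ for $|\beta|\ge 1$ — these follow since $\log(1+|\xi|^2)$ is smooth with bounded derivatives near $0$ and behaves like $2\log|\xi|$ for large $|\xi|$ — each term is bounded by $C_d\,|a+\i b|^k\,|\xi|^{-|\alpha|} \le C_d\,(1+a+|b|)^{|\alpha|}\,|\xi|^{-|\alpha|}$. Summing over the finitely many terms and over $|\alpha| \le d+2$ gives $M \le C_d\,(1+a+|b|)^{d+2}$, as required. (One should be slightly careful that $|a + \i b| \le a + |b| \le 1 + a + |b|$ and that raising to the $k$-th power for $k \le d+2$ is absorbed into the stated exponent $d+2$ — this is where the exponent $d+2$ comes from, matching the $n+2 = d+2$ derivative count in Mihlin.)

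The main obstacle — really the only non-routine point — is getting the dependence on $a$ and $|b|$ to come out as a \emph{polynomial} of the stated degree rather than, say, an exponential in $|b|$; this is exactly why one keeps the factor $|(1+|\xi|^2)^{-\i b}| = 1$ explicit and never estimates it by anything worse, and why one counts powers of $(a+\i b)$ carefully against the number of derivatives taken. Everything else — the derivative bounds on $\log(1+|\xi|^2)$, the Leibniz/Faà di Bruno expansion, and the final invocation of Proposition~\ref{multthrm.thrm} — is standard. I would finish by remarking that the $J_t$ statement is literally the $d=1$ instance applied in the time variable.
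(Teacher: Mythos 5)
Your proof is correct and takes essentially the same route as the paper: reduce to the Mihlin theorem and track polynomial growth in $a$ and $|b|$ through the chain rule, keeping $|(1+|\xi|^2)^{-(a+\i b)}|\le 1$ exact so that only powers of $|a+\i b|\le 1+a+|b|$ accumulate, with at most $d+2$ of them. The only (cosmetic) difference is the organization of the composite: the paper writes the symbol as $h(|\xi|^2)$ with $h(\sigma)=(1+\sigma)^{-a-\i b}$, so the inner-function factors are monomials $\xi^{\alpha-2\beta}$ and the decay $|\xi|^{-|\alpha|}$ comes from $|h^{(k)}(\sigma)|\lesssim(1+a+|b|)^k(1+\sigma)^{-a-k}$, whereas you write $m=\exp\bigl(-(a+\i b)\log(1+|\xi|^2)\bigr)$ and get the decay from $|\partial^\beta\log(1+|\xi|^2)|\lesssim|\xi|^{-|\beta|}$ --- both are valid bookkeepings of the same Fa\`a di Bruno expansion. (One slip worth fixing: the intermediate inequality $\min(1,|\xi|)^{-|\beta|}\le|\xi|^{-|\beta|}$ is reversed for $|\xi|>1$; just use the direct bound $|\partial^\beta\log(1+|\xi|^2)|\lesssim|\xi|^{-|\beta|}$, which is what the argument actually needs.)
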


\begin{proof}
We put $h(\sigma) = (1+\sigma)^{-a-\i b}$. According to the Mihlin multiplier theorem, we have
\[
\|J_x^{2a+2\i b} \phi \|_{L^q(\rd)} \leq C M \|\phi\|_{L^q(\rd)},
\]
where
\[
M = \max_{|\alpha| \leq d+2} \sup_{\xi \neq 0} |\xi|^{|\alpha|} |\partial_\xi^\alpha(h(|\xi|^2))|.
\]
Let $\alpha$ be any multi-index. By induction on the length of $\alpha$, we find numerical constants $c_\beta(\alpha)$, one for each multi-index $\beta$ with $2 \beta_i \leq \alpha_i$, $i=1,\ldots,d$, such that
\[
\partial_\xi^\alpha(h(|\xi|^2)) = \sum_\beta c_\beta(\alpha) h^{(|\alpha|-|\beta|)}(|\xi|^2) \xi^{\alpha-2\beta}.
\]
Since the higher order derivatives of $h$ satisfy 
\begin{align*}
|h^{(k)}(\sigma)|
\leq c(k)(1+a+|b|)^k(1+\sigma)^{-a-k},
\end{align*}
we can take $M= C(d)(1+a+|b|)^{d+2}$. The one dimensional case is clearly included in the computation.
\end{proof}

We deduce the following interpolation inequality. In the proof we shall use the notion of holomorphic functions $f: \Omega \subset \mathbb{C} \to L^2(\rdd; \mathbb{C}^N)$. Holomorphy is defined via convergence of difference quotients. If $f$ is locally bounded, then it is equivalent to holomorphy of $z \mapsto \iint f(z) \overline{\phi} \; dx \, dt$ for all $\phi \in \cS(\rdd; \mathbb{C}^N)$. The reader can refer to Appendix~A of \cite{ABHN} for further background.

\begin{proposition}
\label{besselinterp.prop}
Let $f \in \cS(\rdd; \mathbb{C}^N)$, let $\theta \in (0,1)$ and let $q_0,q_{\theta}, q_1 \in (1,\infty)$ be such that $1/{q_{\theta}} = (1-\theta)/{q_0} + \theta/{q_1}$. Then there is a constant $C = C(d,N,q_0,q_1)$ such that
\[ 
\| J^{2\theta - 1}_xJ_t^{-\theta} f \|_{L^{q_{\theta}}(\mathbb{R}^{d+1})} \leq  C\| J_x^{-1}f \|_{L^{q_0}(\mathbb{R}^{d+1})}^{1-\theta} \|J_x^{1} J_t^{-1} f  \|_{L^{q_1}(\mathbb{R}^{d+1})}^{\theta}.
\]
\end{proposition}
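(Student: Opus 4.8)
I would prove Proposition~\ref{besselinterp.prop} by a Stein-type complex interpolation argument built around the analytic family
\[
F(z) := J_x^{2z-1} J_t^{-z} f, \qquad z \in \overline{S}, \quad S := \{z \in \mathbb{C} : 0 < \mathrm{Re}\, z < 1\}.
\]
This family linearly interpolates the orders $(-1,0)$ at $z=0$ and $(1,-1)$ at $z=1$, so $F(\theta)$ is exactly the quantity on the left, while on the boundary lines it decouples: $F(\i y) = J_x^{2\i y} J_t^{-\i y}(J_x^{-1}f)$ and $F(1+\i y) = J_x^{2\i y} J_t^{-\i y}(J_x^{1} J_t^{-1}f)$, i.e.\ an operator of purely imaginary order applied to one of the two data on the right. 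Since $f \in \cS(\rdd;\mathbb{C}^N)$ and the symbol $(1+|\xi|^2)^{(1-2z)/2}(1+|\tau|^2)^{z/2}$ is smooth in $(\tau,\xi)$ with derivatives of polynomial growth and modulus $(1+|\xi|^2)^{(1-2\mathrm{Re}\,z)/2}(1+|\tau|^2)^{\mathrm{Re}\,z/2}$, one checks that $F(z) \in \cS(\rdd;\mathbb{C}^N)$ for every $z \in \overline{S}$, that $z \mapsto F(z)$ is holomorphic on $S$ and continuous on $\overline{S}$ as an $L^2(\rdd;\mathbb{C}^N)$-valued map, and that $\sup_{z\in\overline S}\|F(z)\|_{L^2} < \infty$; here the criterion recalled just before the statement (local boundedness plus weak holomorphy, see Appendix~A of \cite{ABHN}) applies, weak holomorphy being a differentiation under the integral sign.

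To extract the $L^{q_\theta}$-norm by duality I would fix $\psi \in \cS(\rdd;\mathbb{C}^N)$ with $\|\psi\|_{L^{q_\theta'}} = 1$, set
\[
\gamma(z) := \tfrac{q_\theta'}{q_0'}(1-z) + \tfrac{q_\theta'}{q_1'}\,z - 1,
\]
which by the conjugate of the hypothesis, $1/q_\theta' = (1-\theta)/q_0' + \theta/q_1'$, satisfies $\gamma(\theta) = 0$, $1+\mathrm{Re}\,\gamma(\i y) = q_\theta'/q_0'$ and $1+\mathrm{Re}\,\gamma(1+\i y) = q_\theta'/q_1'$, and consider
\[
\Phi(z) := \e^{z^2} \iint \sum_{j=1}^N F_j(z)(x,t)\, \overline{\psi_j(x,t)}\,|\psi(x,t)|^{\gamma(z)}\; dx\, dt,
\]
with $|\psi|^{\gamma(z)} := \e^{\gamma(z)\log|\psi|}$ off the zero set of $\psi$ and the integrand put to $0$ on it. Because $1+\mathrm{Re}\,\gamma(z)$ stays between $\min(q_\theta'/q_0',q_\theta'/q_1')$ and $\max(q_\theta'/q_0',q_\theta'/q_1')$ on $\overline S$, one has $|\psi_j|\,|\psi|^{\mathrm{Re}\,\gamma(z)} \le |\psi|^{1+\mathrm{Re}\,\gamma(z)}$, a bounded, rapidly decaying majorant, so the integral converges and, by dominated convergence using the Schwartz decay of $f$ and of $\psi$, $\Phi$ is holomorphic on $S$ and continuous and bounded on $\overline S$. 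The factor $\e^{z^2}$ is a regulariser: on $\overline S$ one has $|\e^{z^2}| = \e^{(\mathrm{Re}\,z)^2-(\mathrm{Im}\,z)^2} \le \e\,\e^{-(\mathrm{Im}\,z)^2}$, so it is bounded there yet kills any polynomial growth along vertical lines.

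The boundary estimates are then immediate. By H\"older's inequality, the decoupled identities for $F$ above, Lemma~\ref{BesselBound.lem} applied with $a=0$ (slicewise for the partial potentials, in $x$ with $b=y$ and in $t$ with $b=-y/2$), and the normalisations $\||\psi|^{\gamma(\i y)}\psi\|_{L^{q_0'}}^{q_0'} = \iint|\psi|^{q_\theta'} = 1$ and $\||\psi|^{\gamma(1+\i y)}\psi\|_{L^{q_1'}}^{q_1'} = 1$, one gets, with $C = C(d,N,q_0,q_1)$,
\[
|\Phi(\i y)| \le C(1+|y|)^{d+5}\e^{-y^2}\|J_x^{-1}f\|_{L^{q_0}} \le C\|J_x^{-1}f\|_{L^{q_0}},
\]
\[
|\Phi(1+\i y)| \le C(1+|y|)^{d+5}\e^{1-y^2}\|J_x^{1} J_t^{-1}f\|_{L^{q_1}} \le C\|J_x^{1} J_t^{-1}f\|_{L^{q_1}}.
\]
Hadamard's three-lines theorem for the bounded holomorphic function $\Phi$ on $\overline S$ then yields $|\Phi(\theta)| \le C\|J_x^{-1}f\|_{L^{q_0}}^{1-\theta}\|J_x^{1} J_t^{-1}f\|_{L^{q_1}}^{\theta}$. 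Since $\gamma(\theta)=0$ we have $\Phi(\theta) = \e^{\theta^2}\iint \sum_j (J_x^{2\theta-1} J_t^{-\theta}f)_j\,\overline{\psi_j}\; dx\,dt$, and as $J_x^{2\theta-1} J_t^{-\theta}f \in \cS \subset L^{q_\theta}(\rdd;\mathbb{C}^N)$ while $\cS$ is dense in $L^{q_\theta'}$ (recall $q_\theta\in(1,\infty)$), taking the supremum over all such $\psi$ gives the asserted inequality.

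I expect no conceptual obstacle — this is a standard analytic-family argument — the only point needing genuine care being the verification that $\Phi$ is holomorphic on $S$ and continuous and bounded up to $\overline S$. There one must differentiate under the integral sign, the $z$-derivative producing the extra factor $\gamma'(z)(\log|\psi|)\,|\psi|^{\gamma(z)}$, so one has to dominate $|\psi_j|\,\bigl|\log|\psi|\bigr|\,|\psi|^{\mathrm{Re}\,\gamma(z)}$ near the zero set of $\psi$: this is harmless because $\bigl|\log|\psi|\bigr|\,|\psi|^{\varepsilon}$ is bounded for any $\varepsilon>0$ and $1+\mathrm{Re}\,\gamma(z)$ is bounded below by a positive constant, so the product is dominated by a fixed positive power of $|\psi|$, which is integrable. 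One must likewise control $F(z)$ and $\partial_z F(z)$ pointwise, uniformly for $z$ in compact subsets of $\overline S$, and this is where the Schwartz decay of $\cF f$ enters. Everything else — the exponent bookkeeping for $\gamma$, the boundary H\"older and normalisation computations, the invocation of Lemma~\ref{BesselBound.lem}, and Hadamard's theorem — is routine.
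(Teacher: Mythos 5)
Your proposal is correct and follows essentially the same Stein-type complex interpolation argument as the paper: the same analytic family $J_x^{2z-1}J_t^{-z}f$, the same dual auxiliary family $|\psi|^{\gamma(z)}\overline\psi$ (the paper writes it equivalently as $|\phi|^{\lambda(z)}\overline\phi/|\phi|$ with $\lambda(z)=\gamma(z)+1$), the same boundary estimates via Lemma~\ref{BesselBound.lem}, and Hadamard's three-lines theorem. The only cosmetic difference is the placement of the Gaussian regularizer (your $\e^{z^2}$ outside the pairing versus the paper's $\e^{(z-\theta)^2}$ inside $F$), which is immaterial.
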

\begin{proof}
By duality, we have
\begin{align}
\| J^{2\theta - 1}_xJ_t^{-\theta} f \|_{L^{q_{\theta}}(\mathbb{R}^{d+1})} = \sup_\phi \bigg| \iint \langle ( J^{2\theta - 1}_xJ_t^{-\theta}f)(x,t), \overline{\phi(x,t)} \rangle \; dx \, dt \bigg|,
\end{align}
where the supremum is taken over all $\phi \in \cS(\rdd; \mathbb{C}^N)$ normalized in $L^{q_{\theta}'}(\mathbb{R}^{d+1})$.
The idea of proof, coming from the Riesz--Thorin theorem, is to use a holomorphic parametrization of the duality pairing for fixed $\phi$ via functions defined on the strip $S:= \{a+\i b: a \in (0,1),\, b \in \re\}$. More precisely, we define whenever $z \in \overline{S}$,
\begin{align*}
F(z) := \e^{(z-\theta)^2} J_x^{2z-1} J_t^{-z} f, \qquad
G(z) := |\phi|^{\frac{(1-z)q_{\theta}'}{q_0'}+ \frac{zq_{\theta}'}{q_1'}} \frac{\overline{\phi}}{|\phi|},
\end{align*}
where the expression for $G(z)$ is interpreted as $0$ on the set where $\phi$ vanishes.

We derive properties of $F$. For $z = a + \i b$ we have
\begin{align}
\label{besselinterp.prop.eq1}
F(z) = \Big(\e^{(a-\theta)^2-b^2} \e^{\i 2b(a-\theta)}\Big) J_x^{2a + 2 \i b} J_t^{(1-a)-\i b}(J_x^{-1} J_t^{-1} f).
\end{align}
Since $J_x^{-1} J_t^{-1} f$ is a Schwartz function, Lemma~\ref{BesselBound.lem} applied componentwise in combination with Fubini's theorem yields that $F$ is {\it qualitatively} bounded on $S$ with values in $L^2(\rdd; \mathbb{C}^N)$. (The polynomial growth in $b$ is compensated by the exponential function.) Again using Lemma~\ref{BesselBound.lem}, we have the following quantitative bounds on $\partial S$:
\begin{align}
\label{besselinterp.prop.eq2}
\begin{split}
\|F(\i b)\|_{L^{q_0}(\rdd)} 
&\leq C(d,N,q_0) \|J_x^{-1} f\|_{L^{q_0}(\rdd)},\\
\|F(1+ \i b)\|_{L^{q_1}(\rdd)} 
&\leq C(d,N,q_1) \|J_t^{-1}J_x^{1} f\|_{L^{q_1}(\rdd)}.
\end{split}
\end{align}
Next, it follows from $\mathcal{F}f \in \cS(\rdd; \mathbb{C}^N)$ and dominated convergence, that we have a continuous function
\begin{align*}
\mathcal{F}F: \overline{S} \to L^2(\rdd; \mathbb{C}^N), \qquad z \mapsto \e^{(z-\theta)^2} (1+|\xi|^2)^{\frac{1-2z}{2}} (1+|\tau|^2)^{\frac{z}{2}} \mathcal{F}f(\tau,\xi).
\end{align*}
Since the Fourier transform is isometric on $L^2(\rdd; \mathbb{C}^N)$, the same follows for $F$.
Finally, $F$ is holomorphic in $S$. Indeed, for any $\psi \in \cS(\rdd; \mathbb{C}^N)$ we can use Parseval's formula to give
\begin{align*}
\iint \langle F(z), \overline{\psi} \rangle \; dx \, dt = \iint \big \langle e^{(z - \theta)^2} (1 +|\xi|^2)^{\frac{1-2z}{2}}(1 +|\tau|^2)^{\frac{z}{2}} \mathcal{F}f, \overline{\mathcal{F}\psi} \big \rangle \; d\xi \, d\tau
\end{align*}
and the integral in $z$ along any triangle $\triangle \Subset S$ vanishes by Fubini's theorem and holomorphy of the integrand for fixed $(\tau,\xi)$.

The function $G: \overline{S} \to L^2(\rdd; \mathbb{C}^N)$ enjoys the same kind of properties. Here, boundedness follows directly from $\phi \in \cS(\rdd; \mathbb{C}^N)$, continuity and holomorphy are obtained as before, and on $\partial S$ we get from H\"older's inequality and the normalization of $\phi$ that
\begin{align}
\label{besselinterp.prop.eq3}
\begin{split}
\|G(\i b)\|_{L^{q'_0}(\rdd)} 
\leq 1, \qquad
\|G(1+ \i b)\|_{L^{q'_1}(\rdd)} 
\leq 1.
\end{split}
\end{align}

Now, define a scalar-valued function on $\overline{S}$ by
\begin{align*}
H(z) := \iint \langle F(z), G(z) \rangle \; d x \, dt.
\end{align*}
The $L^2(\rdd; \mathbb{C}^N)$-valued properties for $F$ and $G$ above imply that $H$ is bounded and continuous on $\overline{S}$ and holomorphic in $S$. (The inner product preserves continuity and holomorphy by nearly the same proof as for products of scalar functions.)
On the boundary, we conclude from \eqref{besselinterp.prop.eq2}, \eqref{besselinterp.prop.eq3} and H\"older's inequality that
\begin{align*}
 |H(\i b)| &\leq C(d,N,q_0) \|J_x^{-1} f\|_{L^{q_0}(\rdd)} =: M_0,\\
 |H(1+\i b)| &\leq C(d,N,q_1) \|J_t^{-1}J_x^{1} f\|_{L^{q_1}(\rdd)}=:M_1.
\end{align*}
Hadamard's three lines theorem yields $|H(\theta)| \leq M_0^{1-\theta} M_1^{\theta}$. This means that
\begin{align*}
\bigg|\iint \langle J^{2\theta - 1}_xJ_t^{-\theta}f, \overline{\phi} \, \rangle \; dx \, dt \bigg|  \leq C \|J_x^{-1} f\|_{L^{q_0}(\rdd)}^{1-\theta} \|J_t^{-1}J_x^{1} f\|_{L^{q_1}(\rdd)}^{\theta}
\end{align*}
and the  claim follows since $\phi$ was arbitrary and normalized in $L^{q_{\theta}'}(\mathbb{R}^{d+1})$.
\end{proof}

\section{Proof of Theorem~\ref{mainthrm.thrm}}
\label{proof.sec}

\noindent Let $u$ be a weak solution to \eqref{pparatype.eq} in $I \times Q$. We define nested intervals and cubes $I'' \Subset I' \Subset I$ and $Q'' \Subset Q' \Subset Q$ and pick a smooth function $1_{I'' \times Q''} \le \chi \le 1_{I'\times Q'}$. Define a localized version $v = \chi u$ as in Lemma~\ref{tdervbound.lem}. Since the nested sets are arbitrary, it suffices to obtain the continuity statements on $I'' \times Q''$.

\noindent \emph{Step 1: H\"older continuity with values in spatial $L^q$}. Lemma~\ref{tdervbound.lem} justifies applying Proposition~\ref{besselinterp.prop} to $f=v_\epsilon$ with $q_0 = p+\delta$ and $q_1 = p'$. Thus, setting $\theta =\frac{1}{2}$ in Proposition~\ref{besselinterp.prop}, we have
\begin{align}
\label{half-order-exponent.eq1}
\| J_t^{-1/2} v_\epsilon \|_{L^{q}(\mathbb{R}^{d+1})} \leq C,
\end{align}
for some admissible $C$. The exponent $q>2$ is given by
\begin{align}
\label{half-order-exponent.eq2}
 \frac{1}{q} = \frac{1}{2} + \frac{1}{2} \left(\frac{1}{p+\delta} - \frac{1}{p} \right).
\end{align}
We have $(J_t^{-1/2} v_\epsilon)(\cdot \,,x) = J_t^{-1/2} (v_\epsilon(\cdot \,,x)) \in L^q(\re)$ for every $x \in \rd$ since $v_\epsilon$ is a Schwartz function, but \eqref{half-order-exponent.eq1} gives a quantitative bound. 

Fix $x \in \rd$. As $q' < 2$, we obtain from Minkowski's inequality that $G_t^{1/2} \in L^{q'}(\re)$. This Bessel kernel was defined in \eqref{potential-formula.eq}. Hence, we have by Young's convolution inequality that
\begin{align}
\label{half-order-exponent.eq3}
\|v_\epsilon(\cdot \,,x)\|_{L^\infty(\re)}
\leq \|G_t^{1/2}\|_{L^{q'}(\re)} \|J_t^{-1/2} v_\epsilon(\cdot \,,x)\|_{L^q(\re)}.
\end{align}
Now, we appeal to a Fourier analytic characterization of H\"older continuity. This uses a smooth function $\psi$ with support in the set $\{\frac{1}{2} \leq t \leq 4\}$ with the property that $\sum_{j \in \mathbb{Z}} \psi_j(t) = 1$ for all $t \neq 0$, where $\psi_j(t) = \psi(2^{-j}t)$. For one construction see Lemma 8.1 in \cite{M-S}.

\begin{lemma}[Lemma~8.6 in {\cite{M-S}}]
\label{holder.lem}
Let $f \in \cS(\re)$ and let $0< \alpha <1$. There is a constant $C = C(\alpha)$ such that for all $t \neq s$,
\begin{align*}
\frac{|f(t) - f(s)|}{|t-s|^\alpha} \leq C \sup_{j \in \mathbb{Z}} 2^{j\alpha} \|\cF_t^{-1}(\psi_j \cF_t f)\|_{L^\infty(\re)}.
\end{align*}
\end{lemma}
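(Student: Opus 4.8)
The plan is to run the classical Littlewood--Paley dichotomy. I would set $f_j := \cF_t^{-1}(\psi_j\,\cF_t f)$, so that $f = \sum_{j\in\mathbb{Z}} f_j$ with uniform convergence: since $f \in \cS(\re)$, $\cF_t f$ lies in $L^1(\re)$ and $1 - \sum_{|j|\le J}\psi_j$ converges to $0$ boundedly and pointwise away from the origin, whence $\|f - \sum_{|j|\le J}f_j\|_{L^\infty(\re)} \le C\,\|(1-\sum_{|j|\le J}\psi_j)\,\cF_t f\|_{L^1(\re)} \to 0$ by dominated convergence. Put $M := \sup_{j\in\mathbb{Z}} 2^{j\alpha}\|f_j\|_{L^\infty(\re)}$, which we may assume finite since otherwise there is nothing to prove. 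Fix $t \ne s$, write $h := |t-s|$, and pick $N \in \mathbb{Z}$ with $2^N \le h^{-1} < 2^{N+1}$. Everything then comes down to estimating $|f(t)-f(s)| \le \sum_{j\le N}|f_j(t)-f_j(s)| + \sum_{j>N}|f_j(t)-f_j(s)|$, handling the two sums by different means.

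For the high-frequency sum I would use the crude bound $|f_j(t)-f_j(s)| \le 2\|f_j\|_{L^\infty(\re)} \le 2\cdot 2^{-j\alpha}M$ and sum the resulting geometric series, which converges because $\alpha>0$; by the choice of $N$ its total is at most $C(\alpha)\,h^\alpha M$. For the low-frequency sum I would first note that, $\psi$ being supported in a fixed annulus, $\cF_t f_j = \psi_j\,\cF_t f$ is supported in $\{|\tau|\le 2^{j+2}\}$; hence, writing $f_j' = \cF_t^{-1}(\i\tau\,\eta(2^{-j-2}\tau))\ast_t f_j$ with $\eta$ a fixed cutoff equal to $1$ on the unit ball, the $L^1$ norm of the convolution kernel scales like $2^j$ and Bernstein's inequality $\|f_j'\|_{L^\infty(\re)}\le C\,2^j\|f_j\|_{L^\infty(\re)} \le C\,2^{j(1-\alpha)}M$ follows. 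The fundamental theorem of calculus then gives $|f_j(t)-f_j(s)| \le h\,\|f_j'\|_{L^\infty(\re)} \le C\,h\,2^{j(1-\alpha)}M$; summing over $j\le N$ — the series converges because $1-\alpha>0$ — and using $2^N\le h^{-1}$, this sum is at most $C(\alpha)\,h^\alpha M$. Adding the two contributions yields $|f(t)-f(s)| \le C(\alpha)\,h^\alpha M$, which is the asserted inequality.

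The argument is essentially routine; the only step that deserves genuine care is Bernstein's inequality in the low-frequency regime, namely turning the frequency localization $\supp\cF_t f_j \subseteq \{|\tau|\le 2^{j+2}\}$ into the pointwise bound on $f_j'$ with the correct power of $2^j$. This reduces to the behaviour of the $L^1$ norm of $\cF_t^{-1}(\i\tau\,\eta(\tau))$ under dilations — a one-dimensional instance of the kind of computation already made in the proof of Lemma~\ref{BesselBound.lem} — and could equally be read off from the Mihlin multiplier theorem (Proposition~\ref{multthrm.thrm} with $n=1$) applied to a suitably normalized symbol. A minor bookkeeping point is the justification of $f = \sum_j f_j$ and of interchanging this sum with the evaluations at $t$ and $s$; for $f \in \cS(\re)$ this is immediate, since $\|f_j\|_{L^\infty(\re)}$ decays geometrically as $j\to\pm\infty$ up to Schwartz tails so that the series converges absolutely and uniformly.
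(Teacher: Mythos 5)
The paper does not prove this lemma: it is stated as a direct citation of Lemma~8.6 in Muscalu--Schlag~\cite{M-S} and then applied, so there is no ``paper's own proof'' to compare against. Your argument is correct and is the standard Littlewood--Paley dichotomy (which is also how \cite{M-S} proceed): split at the scale $2^N\sim |t-s|^{-1}$, bound the high-frequency blocks $j>N$ trivially by $2\|f_j\|_{L^\infty}\lesssim 2^{-j\alpha}M$ and sum the convergent geometric tail using $2^{-N}\lesssim h$, and bound the low-frequency blocks by the mean value theorem together with Bernstein's estimate $\|f_j'\|_{L^\infty}\lesssim 2^{j}\|f_j\|_{L^\infty}\lesssim 2^{j(1-\alpha)}M$, summing the convergent geometric head using $2^{N}\le h^{-1}$; both pieces are $\lesssim_\alpha h^\alpha M$. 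The Bernstein step is set up correctly via a dilated cutoff that is $\equiv 1$ on $\supp\cF_t f_j\subseteq\{|\tau|\le 2^{j+2}\}$, with the $L^1$ norm of the resulting kernel scaling like $2^j$, and the justification that $\sum_j f_j=f$ with uniform convergence (dominated convergence on the Fourier side, using $\cF_t f\in L^1$ and the bounded overlap of the $\psi_j$; the single bad point $\tau=0$ is a null set) is complete. No gaps.
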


We put $f := v_\epsilon(\cdot \,,x)$. Since $\cF_t^{-1} (\psi_j \cF_t f)$ has a Fourier transform with support in $(-2^{j+2}, 2^{j+2})$, Bernstein's inequality (Lemma 4.13 in \cite{M-S}) yields
\begin{align*}
\|\cF_t^{-1}(\psi_j \cF_t f)\|_{L^\infty(\re)} 
&\leq c 2^{j/q} \|\cF_t^{-1}(\psi_j \cF_t f)\|_{L^q(\re)} \\
&\leq c 2^{j/q} C(p,\psi) 2^{\mathrm{min}\{-j/2,1 \}} \|J_t^{-1/2} v_\epsilon(\cdot\, ,x)\|_{L^q(\re)},
\end{align*}
where $c$ is a numerical constant and the second step is due to the Mihlin multiplier theorem applied to $m(\tau) = \psi_j(\tau) (1+|\tau|^2)^{-1/4}$. The computation of the Mihlin norm is done \emph{verbatim} as in the proof of Lemma~\ref{BesselBound.lem}, taking into account that on the support of $\psi_j$ we have $2^{j-1} \leq |\tau| \leq 2^{j+2}$ in order to obtain the decay in $j$.


The assumptions of Theorem \ref{multthrm.thrm} are hence satisfied. Consequently, we can take $\alpha = \frac{1}{2} - \frac{1}{q} = \frac{1}{2} (\tfrac{1}{p} - \tfrac{1}{p+\delta})$ in Lemma~\ref{holder.lem}. In view of \eqref{half-order-exponent.eq3} we find for all $t \neq s$ and all $x \in \rd$ that
\begin{align}
\label{half-order-exponent.eq4}
|v_\epsilon(t,x)| + \frac{|v_\epsilon(t,x) - v_\epsilon(s,x)|}{|t-s|^\alpha} \leq C \|J_t^{-1/2} v_\epsilon(\cdot \,,x)\|_{L^q(\re)}.
\end{align}
We fix a representative for $v$, a subsequence of $\epsilon$ and a set $E$ of $(d+1)$-measure zero such that $v_\epsilon(t,x) \to v(t,x)$
as $\epsilon \to 0$, whenever $(t,x) \in \rdd \setminus E$. Then we integrate the $q$-th power in $x$, use \eqref{half-order-exponent.eq1} and pass to the limit via Fatou's lemma, to get
\begin{align*}
\sup_{t \in \re} \left(\int_{\rd} |v(t,x)|^{q}\right)^{1/q} + \sup_{\substack{t,s \in \re \\ t \neq s}}\left(\int_{\rd}\frac{|v(t,x) -v(s,x)|^{q}}{|t - s|^{\alpha q}} \right)^{1/q} \leq C.
\end{align*}
Since $v=u$ on $I'' \times Q''$, we obtain $u \in C^\alpha(I''; L^q(Q''))$ as required.

\noindent \emph{Step 2: H\"older continuity on almost all segments in time}. Since the Fourier transform turns convolutions into products, we obtain for all $\varphi \in \cS(\rdd)$ that
\begin{align}
\label{half-order-exponent.eq5}
\langle J_t^{-1/2} v_\epsilon, \varphi \rangle
= \langle J_t^{-1/2} v, \varphi_\epsilon \rangle,
\end{align}
where we use the duality pairing on $\cS'(\rdd)$. In the limit as $\epsilon \to 0$, we have $\varphi_\epsilon \to \varphi$ in $\cS(\rdd)$ and therefore $J_t^{-1/2} v_\epsilon \to J_t^{-1/2} v$ in $\cS'(\rdd)$. On the other hand, this sequence is bounded in $L^q(\rdd)$ by \eqref{half-order-exponent.eq1} and hence admits a weakly convergent subsequence. Identifying the limits, we get $J_t^{-1/2} v \in L^q(\rdd)$. Now, we can use \eqref{half-order-exponent.eq5} to write $J_t^{-1/2} v_\epsilon = (J_t^{-1/2} v)_\epsilon$ and obtain strong convergence in $L^q(\rdd)$. This implies that $\|J_t^{-1/2} v_\epsilon\|_{L^q(\re)} \to \|J_t^{-1/2} v\|_{L^q(\re)}$ in $L^q(\rd)$. Hence, we can pass to a subsequence such that for almost every $x \in \rd$,
\begin{align*}
\|J_t^{-1/2} v_\epsilon(\cdot \,,x)\|_{L^q(\re)} \to \|J_t^{-1/2} v(\cdot \,,x)\|_{L^q(\re)}.
\end{align*}
Fix $x$ with this property and such that $E_x := \{t : (t,x) \in E\}$ has $1$-measure zero, where $E$ is as in Step~1. Passing to the limit in \eqref{half-order-exponent.eq4}, we obtain that $v(\cdot\,,x)$ satisfies the $\alpha$-H\"older condition on $E_x$. Hence, we can re-define $v(\cdot\,,x)$ so that it is $\alpha$-H\"older continuous. Since all modifications take place in $E$, we obtain a representative for $v$ that is $\alpha$-H\"older continuous on $\re \times \{x\}$ for a.e.\ $x \in \rd$. We conclude again since $v=u$ on $I'' \times Q''$. \hfill {\qedsymbol}
\def\cprime{$'$} \def\cprime{$'$} \def\cprime{$'$}


\begin{thebibliography}{100}
\providecommand{\url}[1]{{\tt #1}}
\providecommand{\urlprefix}{URL}
\providecommand{\eprint}[2][]{\url{#2}}

\bibitem{ABHN}
\textsc{W. Arendt}, \textsc{C. Batty}, \textsc{M. Hieber} and \textsc{F. Neubrander}. 
\newblock Vector-valued Laplace transforms and Cauchy problems. Second edition. 
Monographs in Mathematics, vol.~96. Birkh\"{a}user/Springer, Basel, 2011.

\bibitem{ABES1}
\textsc{P. Auscher, S. Bortz, M. Egert and O. Saari}.
On regularity of weak solutions to linear parabolic systems with measurable coefficients. 
{\it J. Math. Pures Appl.} (9) 121 (2019), 216-243.

\bibitem{Denk-Kaip}
\textsc{R. Denk} and \textsc{M. Kaip}.
General parabolic mixed order systems in {${L_p}$} and applications.
Operator Theory: Advances and Applications, vol.~239, Birkh\"{a}user/Springer, Cham, 2013.

\bibitem{GS1982}
\textsc{M.~Giaquinta} and \textsc{M.~Struwe}.
On the partial regularity of weak solutions of nonlinear parabolic systems.
\textit{Math. Z.} 179 (1982), no. 4, 437--451. 

\bibitem{KL}
\textsc{J. Kinnunen} and \textsc{J. Lewis.}
\newblock Higher integrability for parabolic systems of p-Laplacian type. 
{\it Duke Math. J.} 102 (2000), no. 2, 253-271. 

\bibitem{Li57}
\textsc{J.-L.~Lions}.
\newblock{ Sur les probl\`emes mixtes pour certains syst\`emes paraboliques dans des ouverts non cylindriques}. 
\textit{Ann. Inst. Fourier, Grenoble} 7 (1957), 143--182.

\bibitem{M-S}
\textsc{C. Muscalu} and \textsc{W. Schlag}.
\newblock Classical and multilinear harmonic analysis. {V}ol. {I}, Cambridge Studies in Advanced Mathematics, vol.~137, Cambridge University Press, Cambridge, 2013.

\bibitem{Stein}
\textsc{E.M.~Stein}.
\newblock Singular {I}ntegrals and {D}ifferentiability {P}roperties of
{F}unctions. Princeton Mathematical Series, no.~30,
\newblock Princeton University Press, Princeton NJ, 1970.

\bibitem{zaton}
\textsc{W.~Zato\'n}.
\newblock {Tent space well-posedness for parabolic Cauchy problems with rough coefficents\/}.
Preprint (2019), \url{https://arxiv.org/abs/1909.12197}, to appear in \textit{J. Differential Equations}.
\end{thebibliography}
\end{document}